%
%
%
%

\documentclass[a4paper,12pt,twoside]{article}
\pdfoutput=1
\usepackage{graphicx}
\usepackage{color}
\usepackage{pict2e}
\usepackage{dsfont}

\newcommand{\pictype}{} 

\typeout{}
\typeout{Stefan Liebscher}
\typeout{Dynamics near manifolds of equilibria of codimension one}
\typeout{and bifurcation without parameters}
\typeout{}
\typeout{Draft version}
\typeout{}
\typeout{Please choose page format for output (0/1)!}
\typeout{0 DIN A4, 1 US Letter}
\typeout{PapierTypeInput=0}
\def\PaperTypeInput0

\ifthenelse{\PapierTypeInput = 1}
{\setlength\paperheight{11in}%
 \setlength\paperwidth{8.5in}}
{\setlength\paperheight{297mm}%
 \setlength\paperwidth{210mm}}

\setlength{\textwidth}{160mm}
\setlength{\textheight}{222mm}
\setlength{\oddsidemargin}{0.5\paperwidth}     %
\addtolength{\oddsidemargin}{-0.5\textwidth}   %
\setlength{\marginparsep}{0.2\oddsidemargin}   %
\setlength{\marginparwidth}{0.6\oddsidemargin} %
\addtolength{\oddsidemargin}{-1in}             %
\setlength{\evensidemargin}{\oddsidemargin}    %
\setlength{\headsep}{0.2\paperheight}          %
\addtolength{\headsep}{-0.2\textheight}        %
\setlength{\footskip}{\headsep}                %
\setlength{\headheight}{4mm}                   %
\setlength{\topmargin}{1.8\headsep}            %
\addtolength{\topmargin}{-\headheight}         %
\addtolength{\topmargin}{-1in}                 %

\renewcommand{\baselinestretch}{1.21}
\setlength{\parskip}{1ex}
\setlength{\parindent}{2em}
\setlength{\topsep}{0mm}
\setlength{\parsep}{0mm}
\setlength{\itemsep}{0mm}


\setlength{\textfloatsep}{3ex}

\makeatletter
\@addtoreset{equation}{section}
\@addtoreset{figure}{section}
\makeatother


\newtheorem{prop}{\bf Proposition}[section]

\newtheorem{theorem}[prop]{\bf Theorem}
\newtheorem{remark}[prop]{\bf Remark}
\newenvironment{proof}
  {\begin{trivlist}\item[]{\bf Proof.}}
  {\hspace*{\fill}{$\bowtie$}\end{trivlist}}


\newcommand{\setR}{\mathds{R}}

\newcommand{\imagi}{\mathrm{i}}
\newcommand{\spec}{\mathrm{spec\,}}
\newcommand{\const}{\mathrm{const.}}

\newcommand{\hot}{\mathrm{h.o.t.}}
\newcommand{\Ord}{\mathcal{O}}
\newcommand{\Diff}{\mathrm{D}}
\newcommand{\diff}{\mathrm{d\,}}


\begin{document}
\pagestyle{empty}
\markboth
  {Stefan Liebscher}
  {Codim-one manifolds of equilibria and bifurcation without parameters}
\pagenumbering{arabic}

\vspace*{\fill}

\begin{center}
\Large\bfseries
Dynamics near manifolds of equilibria\\
of codimension one \\
and bifurcation without parameters
\end{center}

\vspace*{\fill}

\begin{center}
\textbf{\large Stefan Liebscher}
\\ ~
\\ Free University Berlin
\\ Institute of Mathematics
\\ Arnimallee 3
\\ D--14195 Berlin
\\ Germany
\\ ~
\\ {\tt stefan.liebscher@fu-berlin.de}
\\ {\tt http://dynamics.mi.fu-berlin.de/}
\end{center}

\vspace*{\fill}

\begin{center}
Preprint, 2010-Sep-07
\end{center}

%

\vspace*{\fill}

\clearpage

\vspace*{\fill}

\begin{abstract}
\renewcommand{\baselinestretch}{1.15}
\setlength{\parskip}{1ex}
\setlength{\parindent}{0mm}
We investigate the breakdown of normal hyperbolicity of a manifold of equilibria of a flow.
In contrast to classical bifurcation theory we assume the absence of any flow-invariant 
foliation at the singularity transverse to the manifold of equilibria. 
We call this setting bifurcation without parameters.

In the present paper we provide a description of general systems with a manifold 
of equilibria of codimension one as a first step 
towards a classification of bifurcations without parameters.
This is done by relating the problem to singularity theory of maps.

\bigskip

\textbf{Key word:} manifolds of equilibria; bifurcation without parameters; singularities of vector fields.

\textbf{2000 Mathematics Subject Classification:} 34C23, 34C20, 58K05
\end{abstract}

\vspace*{\fill}

\cleardoublepage
\setcounter{page}{1}
\pagestyle{myheadings}

\section{Introduction}
\label{Lie:secIntroduction}

We study dynamical systems with manifolds of equilibria near points 
at which normal hyperbolicity of these manifolds is violated.
Manifolds of equilibria appear frequently in classical bifurcation theory 
by continuation of a trivial equilibrium.
Here, however, we are interested in manifolds of equilibria 
which are not caused by additional parameters.
In fact we require the absence of any flow-invariant foliation 
transverse to the manifold of equilibria at the singularity.
We therefore call the emerging theory \emph{bifurcation without parameters}.

Albeit the apparent degeneracy of our setting 
(of infinite codimension in the space of all smooth vectorfields)
there is a surprisingly rich and diverse set of applications ranging from
networks of coupled oscillators \cite{Liebscher97-Diplom},
viscous and inviscid profiles of stiff hyperbolic balance laws \cite{HaerterichLiebscher05-TravWaves},
standing waves in fluids \cite{AfendikovFiedlerLiebscher07-PlaneKolmogorovFlows,
AfendikovFiedlerLiebscher08-PlaneKolmogorovFlows},
binary oscillations in numerical discretizations \cite{FiedlerLiebscherAlexander98-HopfBinOsc},
population dynamics \cite{Farkas84-ZipBifurcation}, 
cosmological models \cite{Wainwright05-Cosmology},
and many more.
The present paper is a first step towards a classification 
of bifurcations without parameters.

Consider a vector field
\begin{equation}\label{Lie:eqGeneralVectorFieldNxM}
\begin{array}{rcll}
 \dot{x} &=& f(x,y) \qquad& \in\setR^n, \\
 \dot{y} &=& g(x,y) \qquad& \in\setR^m
\end{array}
\end{equation}
with a manifold of equilibria $\{\,(0,y)\;:\;y\in\setR^m\,\}$, i.e.
\begin{equation}\label{Lie:eqGeneralEqManifold}
 f(0,y) \;\equiv\; 0, \qquad g(0,y) \;\equiv\; 0.
\end{equation}
As long as the manifold remains normally hyperbolic, 
i.e. the linearization of $f$ on the manifold has no purely imaginary eigenvalues,
\begin{equation}\label{Lie:eqGeneralHyperbolicity}
 \spec \partial_x f(0,y) \,\cap\, \imagi\setR \;=\; \emptyset,
\end{equation}
there exists a local flow-invariant foliation with leaves homeomorphic 
to a standard saddle, for example by the theorem of Shoshitaishvili 
\cite{Shoshitaishvili75-Bif}.
Bifurcations are characterized by a non-hyperbolic block $A$ of the linearization
\begin{equation}\label{Lie:eqGeneralLinearization}
\left(\begin{array}{cc}A(y)&0\\B(y)&0\end{array}\right) \;=\;
\left(\begin{array}{cc} \partial_x f & \partial_y f \\ \partial_x g & \partial_y g
      \end{array}\right)(0,y),
\end{equation}
say at the origin, i.e. the spectrum of $A(0)$ intersects the imaginary axis,
\begin{equation}\label{Lie:eqGeneralNonHyperbolicity}
 \spec A(0) \,\cap\, \imagi\setR \;\neq\; \emptyset.
\end{equation}
Restricting to a center manifold we can assume
\begin{equation}\label{Lie:eqGeneralCenter}
 \spec A(0) \;\subset\; \imagi\setR.
\end{equation}
We will always assume that the vector field is smooth enough 
to allow suitable expansions.

Note the analogy to classical bifurcation theory where $y$ would be a parameter,
i.e. $g \equiv 0$.
For references on classical bifurcation theory see for example
\cite{Arnold83-GeometricalMethods, 
HaleKocak91-Dynamics, Kuznetsov95-Bifurcation, Vanderbowhede89-CentManifolds} 
and the references there.
In the classical case, $g \equiv 0$, however, the flow invariant transverse foliation 
$\{ y = \const \}$ is also present in a neighborhood of the bifurcation point.
This is no longer true in the general case 
(\ref{Lie:eqGeneralVectorFieldNxM}, \ref{Lie:eqGeneralEqManifold})
without parameters.
Indeed, generic functions $g$ of the form (\ref{Lie:eqGeneralEqManifold}) yield 
a drift in the ``parameter'' direction $y$ 
which excludes any flow-invariant foliation transverse to the manifold of equilibria 
near a singularity (\ref{Lie:eqGeneralNonHyperbolicity}).
Thus, the resulting nonlinear local dynamics differ considerably 
from classical bifurcation scenarios.

A rigorous analysis of bifurcations without parameters
(\ref{Lie:eqGeneralVectorFieldNxM}, \ref{Lie:eqGeneralEqManifold},
 \ref{Lie:eqGeneralCenter})
has been carried out for the following cases:
\begin{enumerate}
\item
A simple eigenvalue zero of $A$ (transcritical point), 
$n=m=1$,
with linearization at the bifurcation point:
\[
\left( \begin{array}{cc} 0 & 0 \\ 1 & 0 \end{array} \right),
\]
see \cite{Liebscher97-Diplom}.
\item
A pair of purely imaginary nonzero eigenvalues of $A$ (Andronov-Hopf point), 
$n=2$, $m=1$,
with linearization at the bifurcation point:
\[
\left( \begin{array}{ccc} 0 & 1 & 0 \\ -1 & 0 & 0 \\ 0 & 0 & 0 \end{array} \right),
\]
see \cite{FiedlerLiebscherAlexander98-HopfTheory}. 
A partial description can also be found in \cite{Farkas84-ZipBifurcation}.
\item
An algebraically double and geometrically simple eigenvalue zero of $A$ (Bogdanov-Takens point),
$n=m=2$,
with linearization at the bifurcation point:
\[
\left( \begin{array}{cccc} 0 & 0 & 0 & 0 \\ 1 & 0 & 0 & 0 \\ 
       0 & 1 & 0 & 0 \\ 0 & 0 & 0 & 0 \end{array} \right),
\]
see \cite{FiedlerLiebscher01-TakensBogdanov}. 
With additional symmetries, 
this case is also studied in \cite{AfendikovFiedlerLiebscher07-PlaneKolmogorovFlows,
AfendikovFiedlerLiebscher08-PlaneKolmogorovFlows} 
\end{enumerate}
Note the nonzero blocks $B$ at transcritical and Bogdanov-Takens points
yielding a drift in $y$-direction and excluding any flow-invariant transverse foliation.
At Andronov-Hopf points this drift is induced by a generic second-order term
$\Pi_y\Delta_x({f \atop g})(0) \neq 0$ which is the leading order term of the
drift in $y$-direction averaged over the periodic motion of the linearization.

Bifurcations without parameters can also appear combined with additional parameters,
for example $g(y) = g(y_1,y_2) = (g_1(y_1,y_2),0)$. 
For Bogdanov-Takens points the case of a generic vector field 
with two-dimensional equilibrium manifold turns out to be equivalent to 
the case of a generic one-parameter family of vector fields with one-dimensional
equilibrium manifolds,
at least up to leading order of the suitably rescaled normal form; 
see \cite{FiedlerLiebscher01-TakensBogdanov}.
Both viewpoints are closely related in this case.
In the example of a transcritical point with drift singularity, 
studied in section \ref{Lie:secTranscritDriftSingularity}, 
however, both settings lead to drastically different dynamical systems, 
see Remark \ref{Lie:remTranscritDriftSingularityNoEquilibria}.

In the present paper we analyze the case $x\in\setR$, $y\in\setR^m$
of dynamical systems with a codimension-one manifold of equilibria.
As it turns out, the dynamics near bifurcation points of codimension $m$
on these manifolds can be related to singularities of smooth maps $h:\setR^{m+1}\to\setR$;
the Theorem \ref{Lie:thSingularity}.
This correspondence permits the application of singularity theory 
and most notably of the classification of singularities 
to bifurcations without parameters.
It might serve as a first step towards 
a classification of general bifurcations without parameters.

In section \ref{Lie:secTranscritDriftSingularity} 
we will start with an example to illustrate the general theorem formulated and proved 
in section \ref{Lie:secGeneralTheorem}.
In section \ref{Lie:secDiscussion} we conclude with a discussion 
of further questions, possibilities and problems.

\textbf{Acknowledgement:} This work was supported by the Deutsche Forschungsgemeinschaft.


\section{Transcritical points with drift singularity}
\label{Lie:secTranscritDriftSingularity}

Let us first discuss the cases $m=1,2$ of bifurcations 
along one- and two-dimensional equilibrium manifolds 
as an example to illustrate the general theorem in section \ref{Lie:secGeneralTheorem}.

\subsection{Transcritical point}

The case $m=1$ of a line of equilibria in a 2-dimensional center manifold 
has already been studied in \cite{Liebscher97-Diplom}, 
see also \cite{FiedlerLiebscher02-ICM-Beijing}.
In classical bifurcation theory, the only robust bifurcation of
\begin{equation}\label{Lie:eqClassicalTranscrit}
\begin{array}{rcll}
 \dot{x}       &=& f(x,\lambda) \quad& \in\setR, \qquad f(0,\lambda) \equiv 0,\\
 \dot{\lambda} &=& 0                 & \in\setR
\end{array}
\end{equation}
is the transcritical bifurcation with normal form
\begin{equation}\label{Lie:eqClassicalTranscritNF}
 \dot{x} \;=\; x(x-\lambda) + \hot
\end{equation}
see figure \ref{Lie:figTranscritical}(a).
It is caused by the nontrivial eigenvalue of the linearization at the equilibrium $x=0$
crossing zero with nonvanishing speed as the parameter $\lambda$ increases.
Together with the non-degeneracy condition $\partial_x^2f(0,0) \neq 0$, 
this implies the above normal form (\ref{Lie:eqClassicalTranscritNF}).

\begin{figure}
\setlength{\unitlength}{0.48\textwidth}
\begin{picture}(1.0,1.0)(0.0,0.0)
\put(0.20,0.02){\thicklines\color{blue}\line(0,1){0.96}}
\put(0.40,0.02){\thicklines\color{blue}\line(0,1){0.96}}
\put(0.60,0.02){\thicklines\color{blue}\line(0,1){0.96}}
\put(0.80,0.02){\thicklines\color{blue}\line(0,1){0.96}}
\put(0.20,0.07){\thicklines\color{blue}\vector(0,-1){0.04}}
\put(0.40,0.27){\thicklines\color{blue}\vector(0,-1){0.04}}
\put(0.60,0.37){\thicklines\color{blue}\vector(0,-1){0.04}}
\put(0.80,0.37){\thicklines\color{blue}\vector(0,-1){0.04}}
\put(0.20,0.68){\thicklines\color{blue}\vector(0,-1){0.04}}
\put(0.40,0.68){\thicklines\color{blue}\vector(0,-1){0.04}}
\put(0.60,0.78){\thicklines\color{blue}\vector(0,-1){0.04}}
\put(0.80,0.98){\thicklines\color{blue}\vector(0,-1){0.04}}
\put(0.20,0.33){\thicklines\color{blue}\vector(0,1){0.04}}
\put(0.40,0.43){\thicklines\color{blue}\vector(0,1){0.04}}
\put(0.60,0.52){\thicklines\color{blue}\vector(0,1){0.04}}
\put(0.80,0.62){\thicklines\color{blue}\vector(0,1){0.04}}
\put(0.98,0.52){\makebox(0,0)[rb]{$\lambda$}}
\put(0.98,0.48){\makebox(0,0)[rt]{\small trivial equilibria}}
\put(0.52,0.98){\makebox(0,0)[lt]{$x$}}
\put(0.00,0.50){\linethickness{1.3pt}\line(1,0){1}}
\multiput(0.02,0.50)(0.016,0.0){61}{\circle*{0.016}}
\put(0.50,0.00){\line(0,1){1}}
\put(0.02,0.02){\thicklines\line(1,1){0.96}}
\multiput(0.02,0.02)(0.016,0.016){61}{\circle*{0.016}}
\put(0,0){\framebox(1,1){}}
\put(0.98,0.02){\makebox(0,0)[br]{(a)}}
\end{picture}
\hfill
\begin{picture}(1.0,1.0)(0.0,0.0)
\put(-0.02,-0.02){\makebox(1.04,1.04){%
 \includegraphics[width=1.04\unitlength]{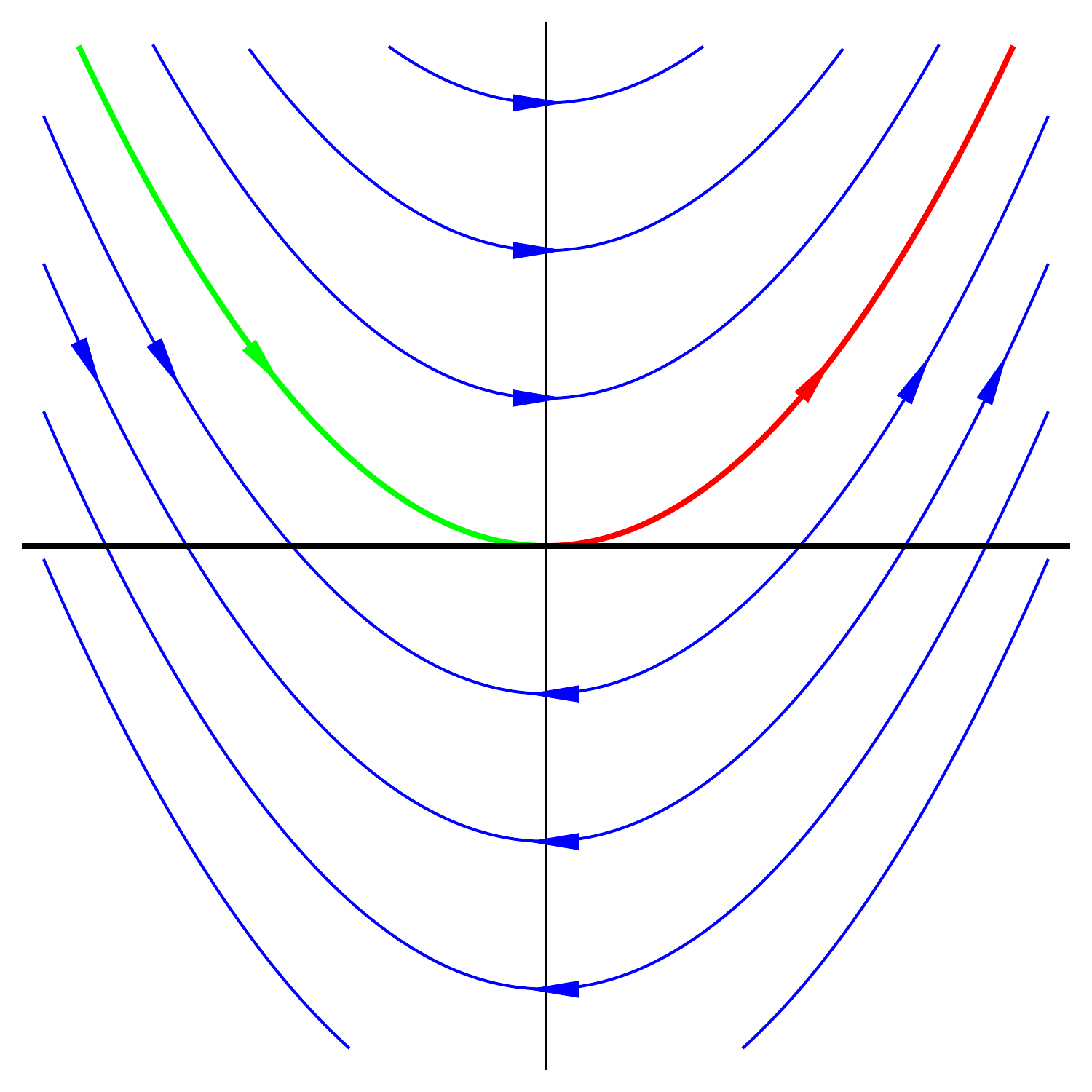}}}
\put(0.98,0.52){\makebox(0,0)[rb]{$y$}}
\put(0.98,0.48){\makebox(0,0)[rt]{equilibria}}
\put(0.52,0.98){\makebox(0,0)[lt]{$x$}}
\multiput(0.02,0.50)(0.016,0.0){61}{\circle*{0.016}}
\put(0,0){\framebox(1,1){}}
\put(0.98,0.02){\makebox(0,0)[br]{(b)}}
\end{picture}
\caption{\label{Lie:figTranscritical}
Transcritical bifurcation point: classical (a) and without parameters (b).}
\end{figure}

Without parameters, 
\begin{equation}\label{Lie:eqTranscrit}
\begin{array}{rcll}
 \dot{x}       &=& f(x,y) \quad& \in\setR, \qquad f(0,y) \equiv 0,\\
 \dot{y}       &=& g(x,y)      & \in\setR, \qquad g(0,y) \equiv 0,
\end{array}
\end{equation}
the nontrivial eigenvalue $\partial_xf(0,y)$ can change sign along lines of equilibria $\{y=0\}$.
The generic normal form reads
\begin{equation}\label{Lie:eqTranscritNF}
\begin{array}{rcll}
 \dot{x}       &=& xy + \hot,\\
 \dot{y}       &=& x,
\end{array}
\end{equation}
see figure \ref{Lie:figTranscritical}(b).
It requires the same transversality condition of the nontrivial eigenvalue 
as the classical transcritical bifurcation.
The non-degeneracy condition, however, is replaced by $\partial_x g(0,0) \neq 0$ 
and yields the two-dimensional Jordan block of the linearization at the transcritical point.
Trajectories form parabolas with tangency to the line of equilibria at the transcritical point.
The flow direction is reversed on opposite sides of the equilibrium line.

\subsection{Parameter dependent transcritical point with drift singularity}

Along two-dimensional equilibrium manifolds we expect transcritical points 
to form one-dimensional curves, by implicit function theorem.
At isolated points one of the non-degeneracy conditions may fail and codimension-two 
singularities appear.
We will discuss the case of failing drift condition, 
first in a one-parameter-family on lines of equilibria 
and then along a two-dimensional equilibrium surface.

With one parameter, the setting is as follows. We consider a system
\begin{equation}\label{Lie:eqSemiclassicalDriftsingularity}
\left.
\begin{array}{rclcl}
\left(\begin{array}{c} \dot{x} \\ \dot{y} \end{array}\right)
 &=&
 F(x,y,\lambda)
 &=&
 \left(\begin{array}{c} f(x,y,\lambda) \\ g(x,y,\lambda) \end{array}\right)
\\[3ex]
\dot{\lambda} \;\;\;\; &=& 0,
\end{array}
\qquad\right\}\quad
x,y,\lambda \in \setR,
\end{equation}
with the following properties:
\begin{enumerate}
\item 
For all parameter values, 
there exists a line of equilibria, $F(0,y,\lambda) \equiv 0$,
forming a plane of equilibria in the extended phase space.
\item
For all parameter values, 
the origin is a transcritical point, 
i.e. the origin has an eigenvalue zero in transverse direction to the equilibrium plane, 
$\partial_x f(0,0,\lambda) \equiv 0$.
\item
For all parameter values, 
this nontrivial eigenvalue crosses zero with nonvanishing speed as $y$ increases,
$\partial_y \partial_x f(0,0,0) > 0$.
\item
At $\lambda=0$ the drift non-degeneracy condition fails, $\partial_x g(0,0,0) = 0$.
\item
This drift degeneracy is transverse, 
i.e. the drift changes direction with nonvanishing speed, as $\lambda$ increases,
$\partial_\lambda \partial_x g(0,0,0) > 0$.
\end{enumerate}
The first condition is our structural assumption, 
(iii,v) are non-degeneracy assumptions fulfilled generically, 
and (ii,iv) describe our bifurcation point.
This setup is robust, i.e. under small perturbations of $F$ respecting (i) 
there is a point near the origin satisfying (ii--v) for the perturbed system.
From the viewpoint of singularity theory, (ii,iv) define a singularity of codimension two
that is unfolded versally by the coordinate $y$ along the line of trivial equilibria 
and the parameter $\lambda$.

Condition (i) allows us to factor out $x$,
\begin{equation}\label{Lie:eqSemiclassicalDriftsingularityFactorX}
F(x,y,\lambda) \;=\; x\tilde{F}(x,y,\lambda),
\end{equation}
with smooth $\tilde{F}$.
Conditions (ii-v) yield an expansion
\begin{equation}\label{Lie:eqSemiclassicalDriftsingularityExpansion}
\tilde{F}(x,y,\lambda) \;=\;
\left(\begin{array}{ll} ax + by \\ cx + dy + \sigma\lambda \end{array}\right)
 + \Ord((|x|+|y|+|\lambda|)^2),
\end{equation}
with coefficients $a,b,c,d,\sigma\in\setR$, $b > 0$, $\sigma > 0$.
We assume an additional non-degeneracy condition
\begin{enumerate}
\addtocounter{enumi}{5}
\item The matrix
\[
\partial_{(x,y)}\left( \frac{1}{x}F \right)(0,0,0) \;=\;
\left(\begin{array}{cc} a & b \\ c & d \end{array}\right)
\]
is hyperbolic, i.e. has no purely imaginary eigenvalues.
\end{enumerate}
Setting
\begin{equation}\label{Lie:eqSemiclassicalDriftsingularityDetTrace}
\delta \,:=\; ad-bc, \qquad \tau\,:=\;a+d,
\end{equation}
for determinant and trace, 
we therefore have $\delta \neq 0$, and $\tau\neq0$ if $\delta>0$.

Applying the multiplier $x^{-1}$ to system (\ref{Lie:eqSemiclassicalDriftsingularityFactorX})
preserves trajectories for $x \neq 0$ but reverses their direction for $x < 0$.
After the coordinate transformation 
$\tilde{x} = x$, $\tilde{y} = ax+by$, $\tilde{\lambda} = b\sigma\lambda$,
we obtain
\begin{equation}\label{Lie:eqSemiclassicalDriftsingularityNF}
\left(\begin{array}{c} \tilde{x}' \\ \tilde{y}' \end{array}\right)
\;=\;
\left(\begin{array}{rrr} & \tilde{y} & \\ 
      -\delta \tilde{x} & 
      \strut\!\!\!\!+ \tau \tilde{y} & 
      \strut\!\!\!\!+ \tilde{\lambda} \end{array}\right)
 + \Ord((|x|+|y|+|\lambda|)^2).
\end{equation}
This yields a bifurcating equilibrium 
at $(\tilde{x},\tilde{y}) \approx (\tilde{\lambda}/\delta,0)$. 
Transversality of the branch of equilibria with respect to the trivial line of equilibria
as well as the hyperbolicity of the nontrivial equilibria is ensured by condition (vi).
Therefore, terms of higher order in (\ref{Lie:eqSemiclassicalDriftsingularityNF})
will preserve this structure.
See figure \ref{Lie:figSemiclassicalDriftsingularity} for phase portraits in various cases.
Note the appearance of the generic transcritical bifurcation without parameters, 
figure \ref{Lie:figTranscritical}, for $\lambda\neq0$.

\begin{figure}
\setlength{\unitlength}{0.325\textwidth}
\centering
\begin{picture}(1.0,1.0)(-0.5,-0.5)
\put( 0.00, 0.20){%
\thicklines\color{blue}
\put(-0.08, 0.06){\vector(4,-3){0}}
\put( 0.00, 0.00){\line(-4,3){0.37}}
\put( 0.08,-0.06){\vector(-4,3){0}}
\put( 0.44,-0.33){\vector(4,-3){0}}
\put( 0.00, 0.00){\line(4,-3){0.48}}
\put(-0.15,-0.12){\vector(-5,-4){0}}
\put(-0.35,-0.28){\vector(5,4){0}}
\put( 0.00, 0.00){\line(-5,-4){0.48}}
\put( 0.15, 0.12){\vector(5,4){0}}
\put( 0.00, 0.00){\line(5,4){0.35}}
\qbezier(-0.1875,-0.19)(0,-0.04)(0.2,-0.19)
\put(-0.1875,-0.19){\line(-5,-4){0.2925}}
\put( 0.2, -0.19){\line(4,-3){0.28}}
\put( 0.00,-0.305){\vector(1,0){0}}
\qbezier(-0.48,-0.504 )( 0,-0.12)(0.48, -0.48)
\put( 0.00,-0.114){\vector(-1,0){0}}
\color{green}
\put(-0.48,-0.464){\line(5,4){0.13}}
\put(-0.35,-0.36){\vector(5,4){0}}
\qbezier(-0.35,-0.36)(-0.15,-0.2)(0.0,-0.2)
\color{red}
\qbezier(0.0,-0.2)(0.16,-0.2)(0.36,-0.35)
\put( 0.36,-0.35){\line(4,-3){0.12}}
\put( 0.36,-0.35){\vector(4,-3){0}}
\color{black}\thinlines
\put( 0.00, 0.00){\circle*{0.02}}
}
\put( 0.00,-0.50){\line(0,1){1}}
\put( 0.02, 0.48){\makebox(0,0)[lt]{$\tilde{x}$}}
\multiput(-0.488,0.0)(0.016,0.0){62}{\circle*{0.016}}
\put( 0.48, 0.02){\makebox(0,0)[rb]{$\tilde{y}$}}
\put( 0.48,-0.02){\makebox(0,0)[rt]{\scriptsize equilibria}}
\put(-0.50,-0.50){\framebox(1,1){}}
\put(-0.02,-0.48){\makebox(0,0)[br]{$\tilde{\lambda} < 0$}}
\end{picture}
\hfill
\begin{picture}(1.0,1.0)(-0.5,-0.5)
\put( 0.00, 0.00){%
\thicklines\color{green}
\put(-0.20, 0.15){\vector(4,-3){0}}
\put( 0.00, 0.00){\line(-4,3){0.48}}
\put(-0.20,-0.16){\vector(5,4){0}}
\put( 0.00, 0.00){\line(-5,-4){0.48}}
\color{red}
\put( 0.24,-0.18){\vector(4,-3){0}}
\put( 0.00, 0.00){\line(4,-3){0.48}}
\put( 0.24, 0.192){\vector(5,4){0}}
\put( 0.00, 0.00){\line(5,4){0.48}}
\color{blue}
\put( 0.00,-0.285){\vector(1,0){0}}
\qbezier(-0.475,-0.48 )( 0,-0.1)(0.48, -0.46)
\put( 0.00, 0.285){\vector(1,0){0}}
\qbezier(-0.48,  0.46 )( 0, 0.1)(0.475, 0.48)
\put( 0.34, 0.11){\vector(1,3){0}}
\put( 0.35,-0.12){\vector(2,-5){0}}
\qbezier(-0.48, -0.264)(-0.15,0)(-0.48, 0.2475)
\put(-0.33,-0.08){\vector(1,3){0}}
\put(-0.33, 0.07){\vector(2,-5){0}}
\qbezier( 0.48,  0.264)( 0.15,0)( 0.48,-0.2475)
\color{black}\thinlines
\put( 0.00, 0.00){\circle*{0.02}}
}
\put( 0.00,-0.50){\line(0,1){1}}
\put( 0.02, 0.48){\makebox(0,0)[lt]{$\tilde{x}$}}
\multiput(-0.488,0.0)(0.016,0.0){62}{\circle*{0.016}}
\put( 0.48, 0.02){\makebox(0,0)[rb]{$\tilde{y}$}}
\put( 0.48,-0.02){\makebox(0,0)[rt]{\scriptsize equilibria}}
\put(-0.50,-0.50){\framebox(1,1){}}
\put(-0.02,-0.48){\makebox(0,0)[br]{$\tilde{\lambda} = 0$}}
\end{picture}
\hfill
\begin{picture}(1.0,1.0)(-0.5,-0.5)
\put( 0.00, -0.20){%
\thicklines\color{blue}
\put(-0.08,-0.06){\vector(4,3){0}}
\put( 0.00, 0.00){\line(-4,-3){0.37}}
\put( 0.08, 0.06){\vector(-4,-3){0}}
\put( 0.44, 0.33){\vector(4,3){0}}
\put( 0.00, 0.00){\line(4,3){0.48}}
\put(-0.15, 0.12){\vector(-5,4){0}}
\put(-0.35, 0.28){\vector(5,-4){0}}
\put( 0.00, 0.00){\line(-5,4){0.48}}
\put( 0.15,-0.12){\vector(5,-4){0}}
\put( 0.00, 0.00){\line(5,-4){0.35}}
\qbezier(-0.1875,0.19)(0,0.04)(0.2,0.19)
\put(-0.1875,0.19){\line(-5,4){0.2925}}
\put( 0.2,  0.19){\line(4,3){0.28}}
\put( 0.00, 0.305){\vector(1,0){0}}
\qbezier(-0.48,0.504)( 0,0.12)(0.48,0.48)
\put( 0.00, 0.114){\vector(-1,0){0}}
\color{green}
\put(-0.48, 0.464){\line(5,-4){0.13}}
\put(-0.35, 0.36){\vector(5,-4){0}}
\qbezier(-0.35,0.36)(-0.15,0.2)(0.0,0.2)
\color{red}
\qbezier(0.0,0.2)(0.16,0.2)(0.36,0.35)
\put( 0.36, 0.35){\line(4,3){0.12}}
\put( 0.36, 0.35){\vector(4,3){0}}
\color{black}\thinlines
\put( 0.00, 0.00){\circle*{0.02}}
}
\put( 0.00,-0.50){\line(0,1){1}}
\put( 0.02, 0.48){\makebox(0,0)[lt]{$\tilde{x}$}}
\multiput(-0.488,0.0)(0.016,0.0){62}{\circle*{0.016}}
\put( 0.48, 0.02){\makebox(0,0)[rb]{$\tilde{y}$}}
\put( 0.48,-0.02){\makebox(0,0)[rt]{\scriptsize equilibria}}
\put(-0.50,-0.50){\framebox(1,1){}}
\put(-0.02,-0.48){\makebox(0,0)[br]{$\tilde{\lambda} > 0$}}
\end{picture}
bifurcating saddle

\bigskip
\bigskip

\begin{picture}(1.0,1.0)(-0.5,-0.5)
\put( 0.00,-0.17){%
\thicklines\color{blue}
{\setlength{\unitlength}{2\unitlength}
\put( 0.20, 0.00 ){\vector(1,-4){0}}
\qbezier( 0.169,-0.154)( 0.218,-0.090)( 0.20 , 0.00 )}%
\put(-0.02, 0.338){\vector(-1,0){0}}
\qbezier( 0.40 , 0.00 )( 0.366, 0.166)( 0.233, 0.259)
\qbezier( 0.233, 0.259)( 0.117, 0.351)(-0.034, 0.336)
\qbezier(-0.034, 0.336)(-0.171, 0.322)(-0.240, 0.218)
\color{red}
\put(-0.283, 0.00 ){\vector(-1,4){0}}
\put( 0.20, 0.00 ){\vector(1,-4){0}}
\qbezier(-0.240, 0.218)(-0.308, 0.128)(-0.283, 0.00 )
\qbezier(-0.283, 0.00 )(-0.259,-0.117)(-0.165,-0.183)
\qbezier(-0.165,-0.183)(-0.082,-0.248)( 0.024,-0.238)
\qbezier( 0.024,-0.238)( 0.122,-0.228)( 0.169,-0.154)
\qbezier( 0.169,-0.154)( 0.218,-0.090)( 0.20 , 0.00 )
\color{blue}
\put(-0.240, 0.218){\line(-6,-10){0.03}}
\color{green}
{\setlength{\unitlength}{0.5\unitlength}\color{red}
\qbezier( 0.40 , 0.00 )( 0.366, 0.166)( 0.233, 0.259)
\qbezier( 0.233, 0.259)( 0.117, 0.351)(-0.034, 0.336)
\color{green}
\put(-0.283, 0.00 ){\vector(-1,3){0}}
\put( 0.20 , 0.00 ){\vector(1,-3){0}}
\qbezier(-0.034, 0.336)(-0.171, 0.322)(-0.240, 0.218)
\qbezier(-0.240, 0.218)(-0.308, 0.128)(-0.283, 0.00 )
\qbezier(-0.283, 0.00 )(-0.259,-0.117)(-0.165,-0.183)
\qbezier(-0.165,-0.183)(-0.082,-0.248)( 0.024,-0.238)
\qbezier( 0.024,-0.238)( 0.122,-0.228)( 0.169,-0.154)
\qbezier( 0.169,-0.154)( 0.218,-0.090)( 0.20 , 0.00 )}%
{\setlength{\unitlength}{0.25\unitlength}
\qbezier( 0.40 , 0.00 )( 0.366, 0.166)( 0.233, 0.259)
\qbezier( 0.233, 0.259)( 0.117, 0.351)(-0.034, 0.336)
\qbezier(-0.034, 0.336)(-0.171, 0.322)(-0.240, 0.218)
\qbezier(-0.240, 0.218)(-0.308, 0.128)(-0.283, 0.00 )
\qbezier(-0.283, 0.00 )(-0.259,-0.117)(-0.165,-0.183)
\qbezier(-0.165,-0.183)(-0.082,-0.248)( 0.024,-0.238)
\qbezier( 0.024,-0.238)( 0.122,-0.228)( 0.169,-0.154)
\qbezier( 0.169,-0.154)( 0.218,-0.090)( 0.20 , 0.00 )}%
{\setlength{\unitlength}{0.125\unitlength}
\qbezier( 0.40 , 0.00 )( 0.366, 0.166)( 0.233, 0.259)
\qbezier( 0.233, 0.259)( 0.117, 0.351)(-0.034, 0.336)
\qbezier(-0.034, 0.336)(-0.171, 0.322)(-0.240, 0.218)
\qbezier(-0.240, 0.218)(-0.308, 0.128)(-0.283, 0.00 )
\qbezier(-0.283, 0.00 )(-0.259,-0.117)(-0.165,-0.183)
\qbezier(-0.165,-0.183)(-0.082,-0.248)( 0.024,-0.238)
\qbezier( 0.024,-0.238)( 0.122,-0.228)( 0.169,-0.154)
\qbezier( 0.169,-0.154)( 0.218,-0.090)( 0.20 , 0.00 )}%
\color{black}\thinlines
\put( 0.00, 0.00){\circle*{0.02}}
}
\put( 0.00,-0.50){\line(0,1){1}}
\put( 0.02, 0.48){\makebox(0,0)[lt]{$\tilde{x}$}}
\multiput(-0.488,0.0)(0.016,0.0){62}{\circle*{0.016}}
\put( 0.48, 0.02){\makebox(0,0)[rb]{$\tilde{y}$}}
\put( 0.48,-0.02){\makebox(0,0)[rt]{\scriptsize equilibria}}
\put(-0.50,-0.50){\framebox(1,1){}}
\put(-0.48,-0.48){\makebox(0,0)[bl]{$\tilde{\lambda} < 0$}}
\end{picture}
\hfill
\begin{picture}(1.0,1.0)(-0.5,-0.5)
\put( 0.00, 0.00){%
\thicklines\color{blue}
{\setlength{\unitlength}{2\unitlength}
\put(-0.01,-0.238){\vector(-1,0){0}}
\qbezier(-0.165,-0.183)(-0.082,-0.248)( 0.024,-0.238)
\qbezier( 0.024,-0.238)( 0.122,-0.228)( 0.169,-0.154)
\qbezier( 0.169,-0.154)( 0.218,-0.090)( 0.20 , 0.00 )}%
\put(-0.02, 0.338){\vector(-1,0){0}}
\put(-0.02,-0.238){\vector(-1,0){0}}
\qbezier( 0.40 , 0.00 )( 0.366, 0.166)( 0.233, 0.259)
\qbezier( 0.233, 0.259)( 0.117, 0.351)(-0.034, 0.336)
\qbezier(-0.034, 0.336)(-0.171, 0.322)(-0.240, 0.218)
\qbezier(-0.240, 0.218)(-0.308, 0.128)(-0.283, 0.00 )
\qbezier(-0.283, 0.00 )(-0.259,-0.117)(-0.165,-0.183)
\qbezier(-0.165,-0.183)(-0.082,-0.248)( 0.024,-0.238)
\qbezier( 0.024,-0.238)( 0.122,-0.228)( 0.169,-0.154)
\qbezier( 0.169,-0.154)( 0.218,-0.090)( 0.20 , 0.00 )
{\setlength{\unitlength}{0.5\unitlength}
\put(-0.04, 0.338){\vector(-1,0){0}}
\put(-0.04,-0.238){\vector(-1,0){0}}
\qbezier( 0.40 , 0.00 )( 0.366, 0.166)( 0.233, 0.259)
\qbezier( 0.233, 0.259)( 0.117, 0.351)(-0.034, 0.336)
\qbezier(-0.034, 0.336)(-0.171, 0.322)(-0.240, 0.218)
\qbezier(-0.240, 0.218)(-0.308, 0.128)(-0.283, 0.00 )
\qbezier(-0.283, 0.00 )(-0.259,-0.117)(-0.165,-0.183)
\qbezier(-0.165,-0.183)(-0.082,-0.248)( 0.024,-0.238)
\qbezier( 0.024,-0.238)( 0.122,-0.228)( 0.169,-0.154)
\qbezier( 0.169,-0.154)( 0.218,-0.090)( 0.20 , 0.00 )}%
{\setlength{\unitlength}{0.25\unitlength}
\put(-0.08, 0.338){\vector(-1,0){0}}
\put(-0.08,-0.238){\vector(-1,0){0}}
\qbezier( 0.40 , 0.00 )( 0.366, 0.166)( 0.233, 0.259)
\qbezier( 0.233, 0.259)( 0.117, 0.351)(-0.034, 0.336)
\qbezier(-0.034, 0.336)(-0.171, 0.322)(-0.240, 0.218)
\qbezier(-0.240, 0.218)(-0.308, 0.128)(-0.283, 0.00 )
\qbezier(-0.283, 0.00 )(-0.259,-0.117)(-0.165,-0.183)
\qbezier(-0.165,-0.183)(-0.082,-0.248)( 0.024,-0.238)
\qbezier( 0.024,-0.238)( 0.122,-0.228)( 0.169,-0.154)
\qbezier( 0.169,-0.154)( 0.218,-0.090)( 0.20 , 0.00 )}%
{\setlength{\unitlength}{0.125\unitlength}
\qbezier( 0.40 , 0.00 )( 0.366, 0.166)( 0.233, 0.259)
\qbezier( 0.233, 0.259)( 0.117, 0.351)(-0.034, 0.336)
\qbezier(-0.034, 0.336)(-0.171, 0.322)(-0.240, 0.218)
\qbezier(-0.240, 0.218)(-0.308, 0.128)(-0.283, 0.00 )
\qbezier(-0.283, 0.00 )(-0.259,-0.117)(-0.165,-0.183)
\qbezier(-0.165,-0.183)(-0.082,-0.248)( 0.024,-0.238)
\qbezier( 0.024,-0.238)( 0.122,-0.228)( 0.169,-0.154)
\qbezier( 0.169,-0.154)( 0.218,-0.090)( 0.20 , 0.00 )}%
\color{black}\thinlines
\put( 0.00, 0.00){\circle*{0.02}}
}
\put( 0.00,-0.50){\line(0,1){1}}
\put( 0.02, 0.48){\makebox(0,0)[lt]{$\tilde{x}$}}
\multiput(-0.488,0.0)(0.016,0.0){62}{\circle*{0.016}}
\put( 0.48, 0.02){\makebox(0,0)[rb]{$\tilde{y}$}}
\put( 0.48,-0.02){\makebox(0,0)[rt]{\scriptsize equilibria}}
\put(-0.50,-0.50){\framebox(1,1){}}
\put(-0.48,-0.48){\makebox(0,0)[bl]{$\tilde{\lambda} = 0$}}
\end{picture}
\hfill
\begin{picture}(1.0,1.0)(-0.5,-0.5)
\put( 0.00, 0.12){%
\thicklines\color{blue}
{\setlength{\unitlength}{2\unitlength}
\put( 0.20, 0.00 ){\vector(-1,6){0}}
\put(-0.01,-0.238){\vector(-1,0){0}}
\qbezier(-0.165,-0.183)(-0.082,-0.248)( 0.024,-0.238)
\qbezier( 0.024,-0.238)( 0.122,-0.228)( 0.169,-0.154)
\qbezier( 0.169,-0.154)( 0.218,-0.090)( 0.20 , 0.00 )}%
\put(-0.283,0.00 ){\vector(1,-6){0}}
\put(-0.02,-0.238){\vector(-1,0){0}}
\qbezier( 0.40 , 0.00 )( 0.366, 0.166)( 0.233, 0.259)
\qbezier( 0.233, 0.259)( 0.117, 0.351)(-0.034, 0.336)
\qbezier(-0.034, 0.336)(-0.171, 0.322)(-0.240, 0.218)
\qbezier(-0.240, 0.218)(-0.308, 0.128)(-0.283, 0.00 )
\qbezier(-0.283, 0.00 )(-0.259,-0.117)(-0.165,-0.183)
\qbezier(-0.165,-0.183)(-0.082,-0.248)( 0.024,-0.238)
\qbezier( 0.024,-0.238)( 0.122,-0.228)( 0.169,-0.154)
\color{green}
\put( 0.20, 0.00 ){\vector(-1,8){0}}
\qbezier( 0.169,-0.154)( 0.218,-0.090)( 0.20 , 0.00 )
\color{blue}
\put( 0.169,-0.154){\line(4,7){0.02}}
\color{red}
{\setlength{\unitlength}{0.5\unitlength}\color{green}
\put(-0.283,0.00 ){\vector(1,-8){0}}
\qbezier( 0.40 , 0.00 )( 0.366, 0.166)( 0.233, 0.259)
\qbezier( 0.233, 0.259)( 0.117, 0.351)(-0.034, 0.336)
\qbezier(-0.034, 0.336)(-0.171, 0.322)(-0.240, 0.218)
\qbezier(-0.240, 0.218)(-0.308, 0.128)(-0.283, 0.00 )
\qbezier(-0.283, 0.00 )(-0.259,-0.117)(-0.165,-0.183)
\qbezier(-0.165,-0.183)(-0.082,-0.248)( 0.024,-0.238)
\color{red}
\put( 0.20, 0.00 ){\vector(0,1){0}}
\qbezier( 0.024,-0.238)( 0.122,-0.228)( 0.169,-0.154)
\qbezier( 0.169,-0.154)( 0.218,-0.090)( 0.20 , 0.00 )}%
{\setlength{\unitlength}{0.25\unitlength}
\put(-0.29,0.00 ){\vector(-1,-8){0}}
\qbezier( 0.40 , 0.00 )( 0.366, 0.166)( 0.233, 0.259)
\qbezier( 0.233, 0.259)( 0.117, 0.351)(-0.034, 0.336)
\qbezier(-0.034, 0.336)(-0.171, 0.322)(-0.240, 0.218)
\qbezier(-0.240, 0.218)(-0.308, 0.128)(-0.283, 0.00 )
\qbezier(-0.283, 0.00 )(-0.259,-0.117)(-0.165,-0.183)
\qbezier(-0.165,-0.183)(-0.082,-0.248)( 0.024,-0.238)
\qbezier( 0.024,-0.238)( 0.122,-0.228)( 0.169,-0.154)
\qbezier( 0.169,-0.154)( 0.218,-0.090)( 0.20 , 0.00 )}%
{\setlength{\unitlength}{0.125\unitlength}
\qbezier( 0.40 , 0.00 )( 0.366, 0.166)( 0.233, 0.259)
\qbezier( 0.233, 0.259)( 0.117, 0.351)(-0.034, 0.336)
\qbezier(-0.034, 0.336)(-0.171, 0.322)(-0.240, 0.218)
\qbezier(-0.240, 0.218)(-0.308, 0.128)(-0.283, 0.00 )
\qbezier(-0.283, 0.00 )(-0.259,-0.117)(-0.165,-0.183)
\qbezier(-0.165,-0.183)(-0.082,-0.248)( 0.024,-0.238)
\qbezier( 0.024,-0.238)( 0.122,-0.228)( 0.169,-0.154)
\qbezier( 0.169,-0.154)( 0.218,-0.090)( 0.20 , 0.00 )}%
\color{black}\thinlines
\put( 0.00, 0.00){\circle*{0.02}}
}
\put( 0.00,-0.50){\line(0,1){1}}
\put( 0.02, 0.48){\makebox(0,0)[lt]{$\tilde{x}$}}
\multiput(-0.488,0.0)(0.016,0.0){62}{\circle*{0.016}}
\put( 0.48, 0.02){\makebox(0,0)[rb]{$\tilde{y}$}}
\put( 0.48,-0.02){\makebox(0,0)[rt]{\scriptsize equilibria}}
\put(-0.50,-0.50){\framebox(1,1){}}
\put(-0.48,-0.48){\makebox(0,0)[bl]{$\tilde{\lambda} > 0$}}
\end{picture}
bifurcating focus

\bigskip
\bigskip

\begin{picture}(1.0,1.0)(-0.5,-0.5)
\put( 0.00,-0.20){%
\thicklines\color{blue}
\qbezier( 0.00, 0.32)( 0.16, 0.32)( 0.48, 0.08)
\qbezier( 0.00, 0.32)(-0.24, 0.32)( 0.00, 0.00)
\qbezier( 0.00, 0.08)( 0.08, 0.08)( 0.48,-0.22)
\qbezier( 0.00, 0.08)(-0.06, 0.08)( 0.00, 0.00)
\put( 0.00, 0.32){\vector(-1,0){0}}
\put( 0.44, 0.11){\vector(4,-3){0}}
\put( 0.24,-0.05){\vector(4,-3){0}}
\color{red}
\qbezier( 0.00, 0.195)( 0.12, 0.195)( 0.48,-0.07)
\put( 0.34, 0.03){\vector(4,-3){0}}
\color{green}
\qbezier( 0.00, 0.195)(-0.15, 0.195)( 0.00, 0.005)
\color{blue}
\put(0,0){\line(4,-3){0.37}}
\put(0,0){\line(3,-4){0.21}}
\put( 0.17,-0.1275){\vector(4,-3){0}}
\put( 0.15,-0.1115){\vector(4,-3){0}}
\put( 0.12,-0.16  ){\vector(3,-4){0}}
\put(0,0){\line(-4,3){0.48}}
\put(0,0){\line(-3,4){0.48}}
\put(-0.17, 0.1275){\vector(-4,3){0}}
\put(-0.15, 0.1115){\vector(-4,3){0}}
\put(-0.37, 0.28  ){\vector(4,-3){0}}
\put(-0.12, 0.16  ){\vector(-3,4){0}}
\put(-0.18, 0.24  ){\vector(3,-4){0}}
\color{black}\thinlines
\put( 0.00, 0.00){\circle*{0.02}}
}
\put( 0.00,-0.50){\line(0,1){1}}
\put( 0.02, 0.48){\makebox(0,0)[lt]{$\tilde{x}$}}
\multiput(-0.488,0.0)(0.016,0.0){62}{\circle*{0.016}}
\put( 0.48, 0.02){\makebox(0,0)[rb]{$\tilde{y}$}}
\put( 0.48,-0.02){\makebox(0,0)[rt]{\scriptsize equilibria}}
\put(-0.50,-0.50){\framebox(1,1){}}
\put(-0.48,-0.48){\makebox(0,0)[bl]{$\tilde{\lambda} < 0$}}
\end{picture}
\hfill
\begin{picture}(1.0,1.0)(-0.5,-0.5)
\put( 0.00, 0.00){%
\thicklines\color{red}
\put(0,0){\line(4,-3){0.48}}
\put(0,0){\line(3,-4){0.36}}
\put( 0.29,-0.2175){\vector(4,-3){0}}
\put( 0.27,-0.2015){\vector(4,-3){0}}
\put( 0.21,-0.28  ){\vector(3,-4){0}}
\qbezier(-0.32,0)(0,-0.24)(0.06,-0.16)
\qbezier(0,0)(0.09,-0.12)(0.06,-0.16)
\put(-0.24,-0.057){\vector(-4,3){0}}
\color{green}
\put(0,0){\line(-4,3){0.48}}
\put(0,0){\line(-3,4){0.36}}
\put(-0.25, 0.1875){\vector(4,-3){0}}
\put(-0.23, 0.1715){\vector(4,-3){0}}
\put(-0.18, 0.24  ){\vector(3,-4){0}}
\qbezier(0.32,0)(0,0.24)(-0.06,0.16)
\qbezier(0,0)(-0.09,0.12)(-0.06,0.16)
\put( 0.20, 0.087){\vector(-4,3){0}}
\color{blue}
\put(-0.32, 0   ){\line(-4,3){0.16}}
\put(-0.40, 0.06){\vector(4,-3){0}}
\put( 0.32, 0   ){\line(4,-3){0.16}}
\put( 0.44,-0.09){\vector(4,-3){0}}
\color{black}\thinlines
\put( 0.00, 0.00){\circle*{0.02}}
}
\put( 0.00,-0.50){\line(0,1){1}}
\put( 0.02, 0.48){\makebox(0,0)[lt]{$\tilde{x}$}}
\multiput(-0.488,0.0)(0.016,0.0){62}{\circle*{0.016}}
\put( 0.48, 0.02){\makebox(0,0)[rb]{$\tilde{y}$}}
\put( 0.48,-0.02){\makebox(0,0)[rt]{\scriptsize equilibria}}
\put(-0.50,-0.50){\framebox(1,1){}}
\put(-0.48,-0.48){\makebox(0,0)[bl]{$\tilde{\lambda} = 0$}}
\end{picture}
\hfill
\begin{picture}(1.0,1.0)(-0.5,-0.5)
\put( 0.00, 0.20){%
\thicklines\color{blue}
\qbezier( 0.00,-0.32)(-0.16,-0.32)(-0.48,-0.08)
\qbezier( 0.00,-0.32)( 0.24,-0.32)( 0.00, 0.00)
\qbezier( 0.00,-0.08)(-0.08,-0.08)(-0.48, 0.22)
\qbezier( 0.00,-0.08)( 0.06,-0.08)( 0.00, 0.00)
\put( 0.00,-0.32){\vector(-1,0){0}}
\put(-0.36,-0.17){\vector(4,-3){0}}
\put(-0.20, 0.02){\vector(4,-3){0}}
\color{green}
\qbezier( 0.00,-0.195)(-0.12,-0.195)(-0.48, 0.07)
\put(-0.28,-0.07){\vector(4,-3){0}}
\color{red}
\qbezier( 0.00,-0.195)( 0.15,-0.195)( 0.00,-0.005)
\color{blue}
\put(0,0){\line(-4,3){0.37}}
\put(0,0){\line(-3,4){0.21}}
\put(-0.13, 0.0975){\vector(4,-3){0}}
\put(-0.11, 0.0815){\vector(4,-3){0}}
\put(-0.09, 0.12  ){\vector(3,-4){0}}
\put(0,0){\line(4,-3){0.48}}
\put(0,0){\line(3,-4){0.48}}
\put( 0.13,-0.0975){\vector(-4,3){0}}
\put( 0.11,-0.0815){\vector(-4,3){0}}
\put( 0.41,-0.31  ){\vector(4,-3){0}}
\put( 0.09,-0.12  ){\vector(-3,4){0}}
\put( 0.24,-0.32  ){\vector(3,-4){0}}
\color{black}\thinlines
\put( 0.00, 0.00){\circle*{0.02}}
}
\put( 0.00,-0.50){\line(0,1){1}}
\put( 0.02, 0.48){\makebox(0,0)[lt]{$\tilde{x}$}}
\multiput(-0.488,0.0)(0.016,0.0){62}{\circle*{0.016}}
\put( 0.48, 0.02){\makebox(0,0)[rb]{$\tilde{y}$}}
\put( 0.48,-0.02){\makebox(0,0)[rt]{\scriptsize equilibria}}
\put(-0.50,-0.50){\framebox(1,1){}}
\put(-0.48,-0.48){\makebox(0,0)[bl]{$\tilde{\lambda} > 0$}}
\end{picture}
bifurcating node

\bigskip

Stable set of the origin in green, unstable set in red.

\caption{\label{Lie:figSemiclassicalDriftsingularity}
Drift singularity along a one-parameter family of transcritical points.}
\end{figure}

\subsection{Transcritical point with drift singularity without parameters}

Replacing the parameter $\lambda$ discussed above
by an additional direction of a plane of equilibria,
the drift along this manifold of equilibria is now a two-dimensional vector.
If will not vanish along generic one-dimensional curves. 
The drift singularity along curves of transcritical points 
is therefore not characterized by a vanishing drift 
but rather by a drift direction orthogonal to the curve of transcritical points.
(A drift in $\lambda$-direction was not possible before.)

The correct setup is given by a system
\begin{equation}\label{Lie:eqDriftsingularity}
\left(\begin{array}{c} \dot{x} \\ \dot{y} \end{array}\right)
 \;=\;
 F(x,y)
 \;=\;
 \left(\begin{array}{c} f(x,y) \\ g(x,y) \end{array}\right),
\qquad
x \in \setR,\quad y \in \setR^2,
\end{equation}
$y=(y_1,y_2)$, $g=(g_1,g_2)$, with the following properties:
\begin{enumerate}
\item 
The $y$-plane consists of equilibria, $F(0,y) \equiv 0$.
\item
There is a transcritical point at the origin, 
i.e. the $y$-plane loses normal hyperbolicity at this point,
$\partial_x f(0,0,0) = 0$.
\item
This loss of normal hyperbolicity 
is caused by the transverse eigenvalue crossing zero transversally,
$\nabla_{y} \partial_x f(0,0,0) \neq 0$.
Without loss of generality, the gradient points in $y_1$-direction, i.e.
$\partial_{y_1} \partial_x f(0,0,0) > 0$, $\partial_{y_2} \partial_x f(0,0,0) = 0$.
By implicit function theorem, this gives rise to a curve of transcritical points 
tangential to the $y_2$-axis.
\item
At the origin, the drift non-degeneracy transverse to the curve of 
transcritical points fails,
$\partial_x g_1(0,0,0) = 0$.
\item
This drift degeneracy is transverse, 
i.e. the drift direction crosses the tangent to the curve of transcritical points 
with nonvanishing speed along the curve of transcritical points,
$\partial_{y_1} \partial_x f(0,0,0) \; \partial_{y_2} \partial_x g_1(0,0,0) 
 + \partial_{y_2}^2 \partial_x f(0,0,0) \; \partial_x g_2(0,0,0) \neq 0$.
\item
The drift does not vanish at the origin, i.e. there is a component tangential 
to the curve of transcritical points,
$\partial_x g_2(0,0,0) > 0$.
\end{enumerate}
Note that conditions (i--v) correspond to the conditions of the previous section. 
Again, the degeneracies (ii,iv) are robust under perturbations satisfying (i), 
provided the non-degeneracy conditions (iii,v,vi) hold.
The signs of $\partial_{y_1} \partial_x f(0,0,0)$ and $\partial_x g(0,0,0)$ in (iii,v)
can be inverted by reflecting $y_1 \mapsto -y_1$ and $y_2 \mapsto -y_2$, respectively.

The non-degeneracy condition (vi) indeed yields
\begin{equation}\label{Lie:eqDriftNondegeneracy}
\left. 
  \frac{\diff}{\diff y_2} 
  \left\langle 
    \nabla_{(y_1,y_2)} \partial_x f, \partial_x g
  \right\rangle (0,\vartheta(y_2),y_2) 
\right|_{y_2=0} \neq 0,
\end{equation}
where $(x,y_1,y_2) = (0,\vartheta(y_2),y_2)$, $\vartheta(0) = 0$, $\vartheta'(0) = 0$,
is the curve $\gamma$ of transcritical points.
Locally, we could reparametrize $y$ to achieve $\vartheta \equiv 0$.
Conditions (ii,iii,vi) would then read:
$\partial_x f(0,0,y_2) \equiv 0$, 
$\partial_{y_1} \partial_x f(0,0,y_2) > 0$,
$\partial_{y_2} \partial_x g_1(0,0,0) \neq 0$.
But let us continue with the general setup.

As in the parameter-dependent case (\ref{Lie:eqSemiclassicalDriftsingularityFactorX}), 
we can factor out $x$ due to condition (i),
\begin{equation}\label{Lie:eqDriftsingularityFactorX}
F(x,y) \;=\; x\tilde{F}(x,y) 
       \;=\; x\left(\begin{array}{c} \tilde{f}(x,y) \\ \tilde{g}(x,y) \end{array}\right).
\end{equation}
However, this time, due to non-degeneracy (vi) no equilibrium remains,
\begin{equation}\label{Lie:eqDriftsingularityRegularOrigin}
\tilde{F}(0,0,0) \;=\; (0,0,\partial_x g_2(0,0,0)) \;\neq\; 0.
\end{equation}
Applying the flow-box theorem, there exists a local smooth diffeomorphism
\begin{equation}\label{Lie:eqDriftsingularityFlowboxTransform}
h(z_0,z_1,z_2) \;=\; \tilde{\Phi}_{z_2}(z_0,z_1,0),
\end{equation}
where $\tilde{\Phi}_t$ denotes the flow generated by the vector field $\tilde{F}$.
This diffeomorphism fixes the origin and transforms $\tilde{F}$ 
into the constant vectorfield,
\begin{equation}\label{Lie:eqDriftsingularityFlowbox}
[\Diff h(z_0,z_1,z_2)]^{-1} \tilde{F}(h(z_0,z_1,z_2))
\;=\; \left(\begin{array}{c} 0 \\ 0 \\ 1 \end{array}\right).
\end{equation}
Applying the same transformation to the original vector field $F$, we obtain
\begin{equation}\label{Lie:eqDriftsingularityTranform}
[\Diff h(z)]^{-1} F(h(z))
\;=\; [\Diff h(z)]^{-1} h_0(z) \tilde{F}(h(z))
\;=\; \left(\begin{array}{c} 0 \\ 0 \\ h_0(z) \end{array}\right),
\end{equation}
where $h=(h_0,h_1,h_2)$.

In a suitable neighborhood of the origin, the vectorfield $F$ is flow-equivalent to
a vectorfield
\begin{equation}\label{Lie:eqDriftsingularity1dVectorfield}
 \dot{z_2} \;=\; h_0(z_0,z_1,z_2)
\end{equation}
on the real line depending on two (classical) parameters $(z_0,z_1)$.
Expansion of $h_0$ using (\ref{Lie:eqDriftsingularityFlowboxTransform}) 
and conditions (ii-vi) yields
\begin{equation}\label{Lie:eqDriftsingularity1dExpansion}
 \dot{z_2} \;=\; a z_2^3 
  + (c_0 z_0 + c_1 z_1) z_2^2 
  + (b z_1 + c_2 z_0 + c_3 z_0^2 + c_4 z_0z_1 + c_5 z_1^2) z_2 
  + z_0 + \Ord(|z|^4)
\end{equation}
with
\begin{equation}\label{Lie:eqDriftsingularity1dCoeffs}
\begin{array}{rcl}
a &=& \left( 
        \partial_{y_1} \partial_x f(0) \; \partial_{y_2} \partial_x g_1(0) 
        + \partial_{y_2}^2 \partial_x f(0) \; \partial_x g_2(0) 
      \right) \partial_x g_2(0)
      \;\neq\; 0,
\\
b &=& \partial_{y_1}\partial_x f(0) \;\neq\; 0.
\end{array}
\end{equation}
In particular $h_0(0,0,z_2) = a z_2^3 + \Ord(|z_2|^4)$.
This is a cusp singularity. 
See \cite{GolubitskyGuillemin73-SingularityTheory, Gibson79-SingularityTheory,
Arnold94-CatastropheTheory, ArnoldGusejnZadeVarchenko85-Singularities, Murdock03-Unfoldings} 
for a background on singularity theory and its connection to dynamical systems.
In fact, non-degeneracies (\ref{Lie:eqDriftsingularity1dCoeffs})
allow to diffeomorphically transform (\ref{Lie:eqDriftsingularity1dExpansion}) into
the normal form
\begin{equation}\label{Lie:eqDriftsingularity1dNF}
 \dot{z_2} \;=\; \pm z_2^3 + z_1 z_2 + z_0 + \Ord(z_2^N),
\end{equation}
for arbitrary normal-form order $N$, see for example \cite{BruceGiblin92-Singularities}, proposition 6.10. 
This is a minimal versal unfolding of the cusp singularity.
See figure \ref{Lie:figCusp}.

\begin{figure}
\centering
\setlength{\unitlength}{0.48\textwidth}
\begin{picture}(1.0,1.0)(0.0,0.0)
\put(0,0){\makebox(1,1){%
  \includegraphics[width=\unitlength]{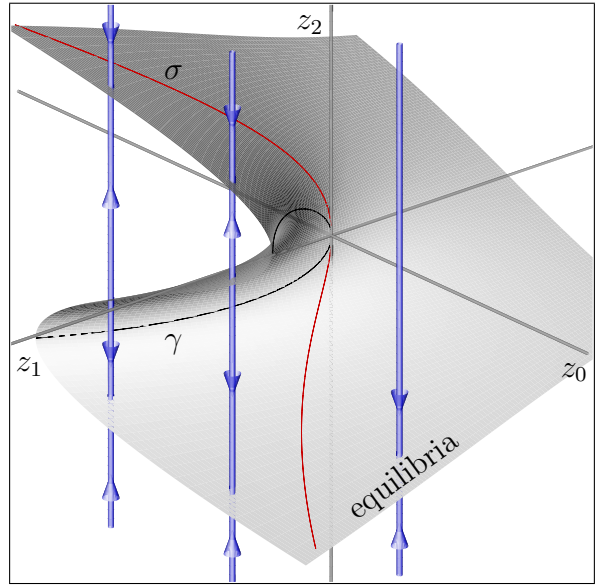}}}
\put(0.99,0.38){\makebox(0,0)[rt]{$z_0$}}
\put(0.01,0.39){\makebox(0,0)[lt]{$z_1$}}
\put(0.49,0.98){\makebox(0,0)[lt]{$z_2$}}
\put(0.28,0.43){\makebox(0,0)[t]{$\gamma$}}
\put(0.28,0.87){\makebox(0,0)[b]{$\sigma$}}
\put(0.60,0.10){\rotatebox{36}{\makebox(0,0)[bl]{equilibria}}}
\put(0,0){\framebox(1,1){}}
\end{picture}
\caption{\label{Lie:figCusp}
Cusp singularity $\dot{z_2} \;=\; a z_2^3 + z_1 z_2 + z_0$ 
with $a=-1$. Reverse direction of trajectories and signs of $z_0$, $z_1$ for $a=+1$.
The fold line $\gamma$ is connected by heteroclinic orbits to the curve $\sigma$, 
both curves have a common tangent at the origin.}
\end{figure}

Reverting the flow-box transformation, 
the cusp singularity yields a description of the local dynamics near 
a transcritical point with drift singularity on a two-dimensional 
manifold of equilibria. Note in particular the cusp-shaped fold line
\[
\gamma: \qquad z_1^3 \;=\; \mp \frac{27}{4} z_0^2 + \Ord(z_0^{N/3}), 
\qquad z_2^3 \;=\; \pm \frac{1}{2} z_0 + \Ord(z_0^{N/3})
\]
of the manifold of equilibria that is connected by heteroclinic orbits to 
the curve
\[
\sigma:\qquad z_1^3 \;=\; \mp \frac{27}{4} z_0^2 + \Ord(z_0^{N/3}), 
\qquad z_2^3 \;=\; \mp 4 z_0 + \Ord(z_0^{N/3}).
\]

\begin{prop}
Under condition (i-vi) the vector field (\ref{Lie:eqDriftsingularity}) 
in a local neighborhood $U$ of the origin 
is flow-equivalent to the cusp singularity (\ref{Lie:eqDriftsingularity1dNF}).
Depending on the sign of the cubic term, that is the sign of 
$a = \mathrm{sign\,} \left( \partial_{y_1} \partial_x f(0) \; \partial_{y_2} \partial_x g_1(0) 
 + \partial_{y_2}^2 \partial_x f(0) \; \partial_x g_2(0) \right)$,
all trajectories in $U$ converge to an equilibrium $(0,y)$ in forward time ($a=-1$) 
or backward time ($a=+1$).

In $U$, the transcritical points on the manifold of equilibria form a curve $\gamma$
through the origin. 
The unstable (for $a=-1$) and stable (for $a=+1$) sets, respectively,
of the two components $\gamma_1,\gamma_2$ of $\gamma\setminus\{0\}$ form manifolds of 
heteroclinic orbits on opposite sides of the manifold of equilibria. 
Their targets in forward time ($a=-1$) or backward time ($a=+1$) 
again form curves $\sigma_{1,2}$ on the manifold of equilibria with 
$\sigma_1\cup\{0\}\cup\sigma_2$ being a tangential curve to $\gamma$.
See figure \ref{Lie:figDriftSingularity}.
\end{prop}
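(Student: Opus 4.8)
The plan is to assemble the reduction carried out in (\ref{Lie:eqDriftsingularityFactorX})--(\ref{Lie:eqDriftsingularity1dNF}) into a single flow-equivalence and then to read the phase portrait off the cusp normal form. First I would use the structural assumption (i) to write $F=x\tilde F$ with $\tilde F$ smooth, as in (\ref{Lie:eqDriftsingularityFactorX}). The hyperplane $\{x=0\}$ is $F$-invariant, so no trajectory crosses it; moreover conditions (ii), (iv), (vi) give $\tilde F(0)=(0,0,\partial_x g_2(0))\neq0$, see (\ref{Lie:eqDriftsingularityRegularOrigin}), so the origin is a regular point of $\tilde F$ and the flow-box theorem supplies the smooth diffeomorphism $h$ of (\ref{Lie:eqDriftsingularityFlowboxTransform}), fixing the origin, with $\Diff h^{-1}(\tilde F\circ h)\equiv(0,0,1)$, see (\ref{Lie:eqDriftsingularityFlowbox}). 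Since $x\circ h=h_0$, the pointwise identity $F=x\tilde F$ yields, \emph{without any reversal of time},
\[
\Diff h(z)^{-1}F(h(z)) \;=\; h_0(z)\,\Diff h(z)^{-1}\tilde F(h(z)) \;=\; (0,0,h_0(z)),
\]
so $h$ conjugates (\ref{Lie:eqDriftsingularity}) near the origin to the two-parameter scalar family $\dot z_2=h_0(z_0,z_1,z_2)$ of (\ref{Lie:eqDriftsingularity1dVectorfield}); under it the equilibrium manifold $\{x=0\}$ corresponds to $\{h_0=0\}$ and the transcritical points to $\{h_0=0,\ \partial_{z_2}h_0=0\}$.

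Second, I would verify the jet expansion (\ref{Lie:eqDriftsingularity1dExpansion})--(\ref{Lie:eqDriftsingularity1dCoeffs}). From $h(z_0,z_1,0)=(z_0,z_1,0)$ one gets $h_0(z_0,z_1,0)=z_0$, so $h_0$ has no $z_2$-independent term besides $z_0$, whose coefficient is thus $1$. Differentiating $\partial_{z_2}h=\tilde F\circ h$ repeatedly at the origin, using $\tilde F(0)=(0,0,\partial_x g_2(0))$ and $\tilde f(0,y)=\partial_x f(0,y)$, conditions (ii) and (iii) force $\partial_{z_2}h_0(0)=\partial_{z_2}^2h_0(0)=0$; conditions (v), (vi) then give $\frac16\partial_{z_2}^3h_0(0)=a\neq0$, condition (iii) gives $\partial_{z_1}\partial_{z_2}h_0(0)=b\neq0$, and (iv) together with the flow-box identities fixes the shape of the remaining lower-order terms as displayed. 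This bookkeeping is routine but lengthy. In particular $h_0(0,0,z_2)=az_2^3+\Ord(z_2^4)$ is a cusp singularity, versally unfolded by $(z_0,z_1)$: the constant term (coefficient $1$) and the term linear in $z_2$ (coefficient $b$) span its two-dimensional normal space.

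Third, I would apply the normal-form theorem for versal unfoldings of the cusp (\cite{BruceGiblin92-Singularities}, proposition~6.10) to transform $h_0$ into $\pm z_2^3+z_1z_2+z_0+\Ord(z_2^N)$ as in (\ref{Lie:eqDriftsingularity1dNF}), the sign being that of $a$. The point requiring care --- and, I expect, the main obstacle --- is that the equivalence used must descend to a flow-equivalence of the $3$-dimensional systems: the normalising diffeomorphism has to be fibre-preserving over the $(z_0,z_1)$-plane, $(z_0,z_1,z_2)\mapsto(\Psi(z_0,z_1),\Phi(z_0,z_1,z_2))$ with $\partial_{z_2}\Phi>0$, and accompanied by fibrewise multiplication by a positive unit (contact equivalence of unfoldings, preserving the zero-set of $h_0$ and the sign of $h_0$ off it). Only such a map carries the vector field $(0,0,h_0)$ to $(0,0,\pm z_2^3+z_1z_2+z_0+\Ord(z_2^N))$ up to a time-reparametrisation on each fibre; one must \emph{not} use the coarser unfolding equivalence that absorbs the $z_0$-term into the parameters, which would destroy the dynamics. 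Composing this normalisation with the conjugacy $h$ then gives the asserted flow-equivalence, on a suitable neighbourhood $U$ of the origin, between (\ref{Lie:eqDriftsingularity}) and the cusp normal form (\ref{Lie:eqDriftsingularity1dNF}).

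Finally, the geometric assertions are read off $P(z):=\pm z_2^3+z_1z_2+z_0+\Ord(z_2^N)$. Shrinking $U$ so that the cubic dominates near the boundary of each fibre, if $a=-1$ the field $\dot z_2=P$ points toward $\{P=0\}$, so every trajectory in $U$ converges in forward time to an equilibrium $(0,y)\in\{x=0\}$; if $a=+1$ the same holds in backward time. The equilibria fill the cusp surface $\{P=0\}$, and its fold curve over the $(z_0,z_1)$-plane, $\gamma=\{P=0,\ \partial_{z_2}P=0\}$, is precisely the set of transcritical points; it passes through the cusp point at the origin, its two arcs $\gamma_{1},\gamma_{2}$ corresponding to the two positions of the coalescing double root. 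Over a base point interior to the cusp region $P(z_0,z_1,\cdot)$ has three simple zeros, the middle one repelling for $a=-1$ (attracting for $a=+1$), whose unstable (resp.\ stable) set consists of the two bounded orbits limiting onto the outer zeros. As the base point tends to an arc $\gamma_i$, the middle zero merges with one outer zero into a semistable equilibrium on $\gamma_i$, and the associated orbit now limits onto the surviving hyperbolic zero; letting the base point run along $\gamma_i$, this zero traces a curve $\sigma_i\subset\{P=0\}$ and the orbits sweep out a sheet of heteroclinic connections, the two sheets lying on opposite sides of the equilibrium manifold. A direct computation with the $3$-jet (double root at $z_2=r$, third root at $z_2=-2r$) produces the cusp-shaped curves $\gamma$ and $\sigma$ displayed before the proposition, both tangent to the $z_2$-axis at the origin, so $\sigma_1\cup\{0\}\cup\sigma_2$ is tangential to $\gamma$ there; the $\Ord(z_2^N)$-tail only deforms these curves within their cusp-shaped form. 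Compare figure \ref{Lie:figCusp}.
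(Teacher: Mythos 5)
Your proposal is correct and follows essentially the same route as the paper, whose "proof" is the preceding reduction (2.10)--(2.19): factor out $x$ by condition (i), apply the flow-box theorem to $\tilde F$ using the nonvanishing drift (vi), expand $h_0$ to exhibit the cusp with nondegenerate coefficients $a,b$, normalize via singularity theory, and read the phase portrait off the cusp normal form. Your added care that the normalizing equivalence must be fibre-preserving and sign-preserving (so that it induces a flow equivalence rather than a mere unfolding equivalence) is a point the paper leaves implicit in its citation of Bruce--Giblin, but it is not a different approach.
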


\begin{remark}\label{Lie:remTranscritDriftSingularityNoEquilibria}
In contrast to the parameter-dependent drift singularity no equilibria bifurcate.
In fact, the drift non-degeneracy excludes any kind of recurrent or stationary 
orbits except the primary manifold of equilibria.
\end{remark}

\begin{figure}
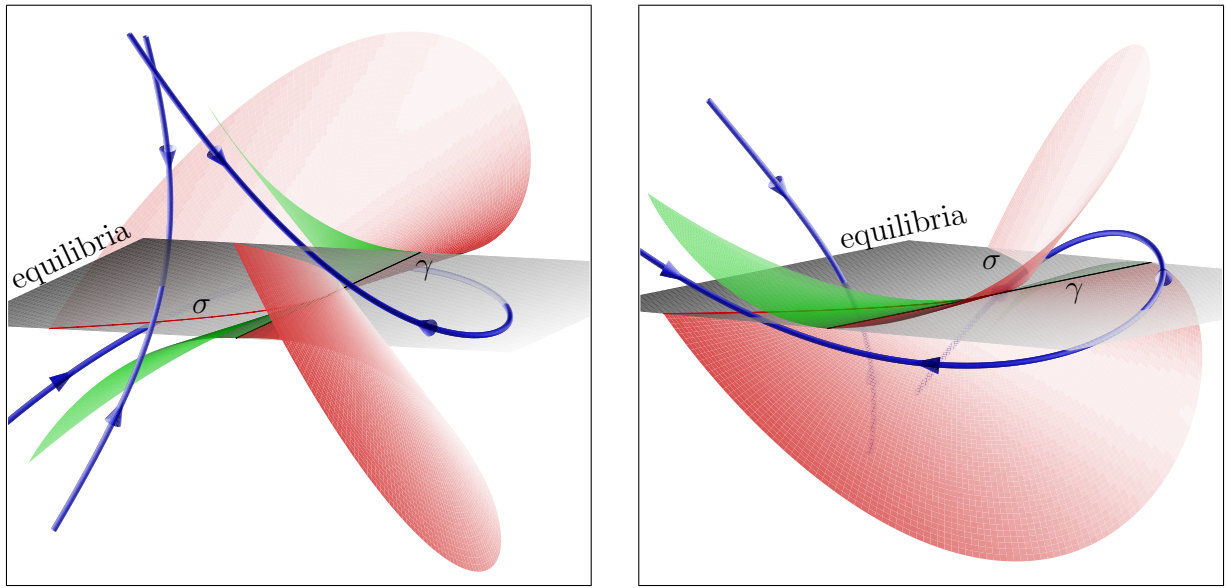

\setlength{\unitlength}{0.48\textwidth}
\begin{picture}(1.0,1.0)(0.0,0.0)
\put(0,0){\makebox(1,1){%
  \includegraphics[width=\unitlength]{DriftSingBottom\pictype}}}
\put(0,0){\framebox(1,1){}}
\put(0.02,0.50){\rotatebox{25}{\makebox(0,0)[bl]{equilibria}}}
\put(0.70,0.56){\makebox(0,0)[tl]{$\gamma$}}
\put(0.35,0.47){\makebox(0,0)[br]{$\sigma$}}
\end{picture}
\hfill
\begin{picture}(1.0,1.0)(0.0,0.0)
\put(0,0){\makebox(1,1){%
  \includegraphics[width=\unitlength]{DriftSingTop\pictype}}}
\put(0,0){\framebox(1,1){}}
\put(0.35,0.57){\rotatebox{12}{\makebox(0,0)[bl]{equilibria}}}
\put(0.73,0.52){\makebox(0,0)[tl]{$\gamma$}}
\put(0.62,0.57){\makebox(0,0)[tr]{$\sigma$}}
\end{picture}
\caption{\label{Lie:figDriftSingularity}
  Transcritical point with drift singularity on a plane of equilibria. 
  Stable set of the line $\gamma$ of transcritical points in green, unstable set in red, 
  selected trajectories in blue. Two different views for $a=-1$. 
  Reverse direction of trajectories and switch colors of manifolds for $a=+1$.}
\end{figure}


\section{General Bifurcation at codimension-one manifolds}
\label{Lie:secGeneralTheorem}

We consider the general case of a manifold of equilibria of codimension one,
\begin{equation}\label{Lie:eqGeneralSingularity}
\left(\begin{array}{c} \dot{x} \\ \dot{y} \end{array}\right)
 \;=\;
 F(x,y)
 \;=\;
 \left(\begin{array}{c} f(x,y) \\ g(x,y) \end{array}\right),
\qquad
x \in \setR,\quad y \in \setR^m.
\end{equation}
Typically, such a system will arise as a reduced system on a center manifold of finite smoothness.
Following the discussion in the previous section we obtain the following theorem.

\begin{theorem}\label{Lie:thSingularity}
The exists a generic subset of the class of all smooth vector fields 
(\ref{Lie:eqGeneralSingularity}) with an equilibrium manifold $\{x=0\}$ of codimension one.
For every vector field in that class the following holds true:

At every point $(x=0,y)$ the vector field is locally flow equivalent to a $m$-parameter family 
\begin{equation}\label{Lie:eqodimEllSingularity}
\dot{z}_m \;=\; \pm z_m^{\ell+1} + \sum_{k=0}^{\ell-1} z_{k} z_m^k + \Ord(z_m^N),
\end{equation}
$0 \leq \ell \leq m$, of vector fields on the real line. 
Here $N$ is the arbitrary but finite normal-form order 
bounded by the smoothness of the initial vector field (\ref{Lie:eqGeneralSingularity}), 
$f,g\in\mathcal{C}^M$, $N \le M$, $N < \infty$.
This is a versal unfolding of the singularity $\dot{z}_m \;=\; \pm z_m^{\ell+1}$ at the origin.

In particular, near bifurcation points of codimension $m$, 
that appear robustly at isolated points on the equilibrium manifold, 
the vector field is locally flow equivalent to
\begin{equation}\label{Lie:eqodimMSingularity}
\dot{z}_m \;=\; \pm z_m^{m+1} + \sum_{k=0}^{m-1} z_k z_m^k + \Ord(z_m^N),
\end{equation}
i.e. a universal unfolding of the singularity $\dot{z}_m \;=\; \pm z_m^{m+1}$ at the origin.
\end{theorem}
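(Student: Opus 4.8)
The strategy mirrors the two worked examples: factor out $x$, apply the flow-box theorem to reduce to a one-dimensional vector field on the line depending on $m$ parameters, and then invoke the classification of singularities of smooth functions of one variable to put that vector field into the stated normal form. I would first make precise the ``generic subset'' of vector fields: it should be defined by finitely many open-dense conditions, namely that at each point of the equilibrium manifold the reduced one-dimensional vector field $h_0(z_0,\dots,z_{m-1},z_m)$ (constructed below) has, as a function of $z_m$ with parameters $(z_0,\dots,z_{m-1})$, only $\mathcal A_k$-singularities with $k \le m$, and that the dependence on the parameters is a versal unfolding in the sense of singularity theory — i.e.\ a transversality-of-jets condition on the map $(x=0,y) \mapsto$ (finite jet of $h_0$). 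By Thom transversality this cuts out a generic (residual, in fact open-dense after restricting to a compact piece) set.

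The core construction proceeds as follows. Condition~(i), $F(0,y)\equiv0$, lets me write $F(x,y) = x\,\tilde F(x,y)$ with $\tilde F$ smooth of class $\mathcal C^{M-1}$. Multiplying by $x^{-1}$ preserves unparametrized trajectories and reverses their orientation for $x<0$, hence gives a topologically equivalent (time-reparametrized, orientation-folded) flow away from $\{x=0\}$, extending continuously across. At a point where $\tilde F(0,y_\ast)\neq0$ — which is the generic situation, since the ``drift'' $\tilde F|_{x=0}$ vanishes only on a subvariety of positive codimension — I apply the flow-box theorem to $\tilde F$: there is a local diffeomorphism $h = (h_0,\dots,h_m)$ fixing the base point with $[\Diff h]^{-1}\tilde F(h) = e_m$, the last coordinate vector. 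Pulling the \emph{original} field $F = x\tilde F = h_0\cdot\tilde F$ (in the new coordinates $x = h_0(z)$) through this diffeomorphism gives $[\Diff h(z)]^{-1}F(h(z)) = h_0(z)\,e_m$, exactly as in~(\ref{Lie:eqDriftsingularityTranform}). So in these coordinates the flow of $F$ is, up to the orientation fold along $\{h_0 = 0\}$, the flow of the scalar equation $\dot z_m = h_0(z_0,\dots,z_{m-1},z_m)$, a one-dimensional system with $m$ parameters. Note $\{h_0 = 0\}$ is the image of the equilibrium manifold $\{x=0\}$, which is consistent because $h_0$ vanishes there to first order and the fold reverses the sign of time where $x<0$, so the equilibria reappear as equilibria of the scalar equation.

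Now everything is reduced to singularity theory of the scalar function $z_m \mapsto h_0(z_0,\dots,z_{m-1},z_m)$. Genericity forces the restriction $h_0(0,\dots,0,z_m)$ to have an $\mathcal A_\ell$-singularity at $z_m=0$ for some $0\le\ell\le m$: it vanishes at $z_m=0$ (equilibrium manifold), the first nonvanishing $z_m$-derivative has order $\ell+1$, and by the standard preparation/finite-determinacy theorem (e.g.\ \cite{BruceGiblin92-Singularities}, Prop.~6.10, as cited) there is a diffeomorphism in $z_m$, depending smoothly on the parameters, reducing the $N$-jet to $\pm z_m^{\ell+1} + \sum_{k=0}^{\ell-1} c_k(z_0,\dots,z_{m-1})\,z_m^k$. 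The versality condition built into the generic class says precisely that the induced map $(z_0,\dots,z_{m-1}) \mapsto (c_0,\dots,c_{\ell-1})$ is a submersion onto a versal base, so a further parameter diffeomorphism replaces $(c_0,\dots,c_{\ell-1})$ by $(z_0,\dots,z_{\ell-1})$, yielding (\ref{Lie:eqodimEllSingularity}); the residual parameters $z_\ell,\dots,z_{m-1}$ are spectators. At an isolated point where the full codimension $\ell=m$ is attained — which happens robustly at isolated points on the $m$-manifold because the $\mathcal A_m$-stratum has codimension $m$ — this specializes to (\ref{Lie:eqodimMSingularity}), the universal unfolding of $\pm z_m^{m+1}$. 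Finally one translates back: $\dot z_m>0$ for $z_m$ large of one sign and $<0$ for the other, so in $U$ every trajectory of the scalar equation converges to a zero of $h_0$ in forward or backward time according to the sign $\pm$, and under the flow-box diffeomorphism these zeros are exactly the points of the equilibrium manifold, giving the global-convergence statement.

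The main obstacle is \emph{not} the local normal form — that is standard finite-determinacy — but the verification that the genericity hypotheses are simultaneously (a) open and dense in the class of vector fields respecting the structural constraint $F(0,y)\equiv0$, and (b) strong enough to guarantee versality of the unfolding \emph{uniformly along the equilibrium manifold} and the robustness of the codimension-$m$ points at isolated locations. This requires a Thom-type transversality argument within the constrained space $\{F : F(0,y)\equiv0\}$, applied to the jet of the induced scalar map $h_0$, together with care that the flow-box coordinates (hence $h_0$ itself) depend on $F$ in a way compatible with taking a residual set — since the flow-box diffeomorphism is only as smooth as the field and is not canonical. Handling the locus where the drift $\tilde F|_{x=0}$ \emph{does} vanish (so the flow-box construction breaks down) must be deferred: generically this is a positive-codimension set on the equilibrium manifold and a separate analysis, but for the stated theorem it suffices that it is avoided at generic base points, which is where I would place the bulk of the transversality bookkeeping.
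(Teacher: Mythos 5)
Your proposal follows essentially the same route as the paper: factor out $x$, observe that the quotient field $\tilde F$ is nonvanishing on $\{x=0\}$ for generic $F$, apply the flow-box theorem to reduce to a scalar equation $\dot z_m = h_0(z_0,\dots,z_m)$ with $m$ parameters, and invoke the classification and versal unfolding of $\mathcal A_\ell$-singularities ($\ell\le m$) with finite determinacy to reach the normal form. One small sharpening: the locus where $\tilde F|_{x=0}$ vanishes, which you defer, is generically \emph{empty} (it is cut out by $m+1$ equations on an $m$-dimensional manifold), so no separate analysis is needed within the generic class.
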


\begin{proof}
The equilibrium condition $f(0,y)=g(0,y)=0$ for all $y\in\setR^m$ allows us to factor out $x$.
\begin{equation}\label{Lie:eqGeneralSingularityFactorX}
F(x,y) \;=\; x\tilde{F}(x,y)
       \;=\; x\left(\begin{array}{c} \tilde{f}(x,y) \\ \tilde{g}(x,y) \end{array}\right).
\end{equation}
The resulting vector field $\tilde{F}:\setR^{m+1}\to\setR^{m+1}$ does not vanish 
on the $m$-dimensional submanifold $\{x=0\}$, for generic $F$.
Without loss of generality, consider a neighborhood $U\subset\setR^{m+1}$ of the origin.

We can apply the flow-box theorem to $\tilde{F}$:
Take a local smooth section
\begin{equation}\label{Lie:eqGeneralTransverseSection}
\Sigma: \setR^m \supset V \longrightarrow U,
\end{equation}
through the origin, $\Sigma(0)=0$, transverse to the vectorfield $\tilde{F}$ in $U$. 
Let $\tilde{\Phi}_t$ be the flow generated by $\tilde{F}$.
Then the flow-box transformation 
\begin{equation}\label{Lie:eqGeneralFlowboxTransform}
h(z_0,...,z_m) \;=\; \tilde{\Phi}_{z_m}(\Sigma(z_0,...,z_{m-1}))
\end{equation}
transforms $\tilde{F}$ into the constant vector field 
$[\Diff h]^{-1}(\tilde{F} \circ h) = (0,\ldots,0,1)$.
Again, $\tilde{\Phi}_t$ denotes the flow to the vector field $\tilde{F}$.
Applying the transformation $h$ to the vector field $F|_U$, 
we obtain a $m$-parameter family 
$[\Diff h]^{-1}(F \circ h) = (0,\ldots,0,\pi_x h)$ 
of vector fields on the real line in a neighborhood $V$ of the origin.

Classification of germs of vector fields and their versal unfoldings is the 
topic of singularity or catastrophe theory.

Singularities on the real line have the form $\dot{z}_m \;=\; \pm z_m^{\ell+1}$. 
In generic $m$-parameter families at most $m+1$ 
leading coefficients of the Tailor expansion vanish, i.e. $\ell \leq m$ and
\[
\dot{z}_m \;=\; \pm z_m^{\ell+1} + \sum_{k=0}^{\ell-1} \zeta_k(z_0,...,z_{m-1}) z_m^k + \Ord(z_m^{\ell+2}).
\]
The coefficient $\zeta_\ell$ vanishes by linear transformation of $z_m$ and the map
$(z_0,...,z_{m-1}) \mapsto (\zeta_0,...,\zeta_{\ell-1})$ has full rank, generically.
Remainder terms, $\Ord(z_m^{\ell+2})$, can be pushed to any finite normal-form order, 
by a suitable coordinate change.
This yields system (\ref{Lie:eqodimEllSingularity}). 
See also \cite{BruceGiblin92-Singularities}, chapter 6.

Genericity conditions amount to algebraic conditions of the coefficients of the 
Taylor expansion at the origin. 
These conditions translate via (\ref{Lie:eqGeneralFlowboxTransform}) 
to generic conditions on $F$.

The versal unfolding (\ref{Lie:eqodimEllSingularity}), one the other hand, 
is a system of the form (\ref{Lie:eqGeneralSingularity}). 
Therefore, it represents the versal unfolding of a generic singularity along 
$m$-dimensional manifolds of equilibria in $(m+1)$-dimensional phase space.
\end{proof}


\section{Discussion}
\label{Lie:secDiscussion}

The present result is a first step towards a more systematic treatment 
of bifurcations without parameters
than done by case studies in \cite{FiedlerLiebscherAlexander98-HopfTheory, 
FiedlerLiebscher01-TakensBogdanov, AfendikovFiedlerLiebscher07-PlaneKolmogorovFlows}.

The removal of the manifold of equilibria by a scalar, albeit singular, multiplier greatly facilitates 
the analysis but restricts it to the case of manifolds of codimension one in the phase space, 
see (\ref{Lie:eqDriftsingularityFactorX}) and (\ref{Lie:eqGeneralSingularityFactorX}).
Hopf points and Bogdanov-Takens points require manifolds of equilibria of at least codimension two. 
Their analysis in \cite{FiedlerLiebscherAlexander98-HopfTheory, FiedlerLiebscher01-TakensBogdanov}
uses a blow-up or rescaling procedure reminiscent of the scalar multiplier used here.
It seems worthwile to closer connect these bifurcations without parameters to singularity theory.
This might provide a suitable setting to also include singularities of the set of equilibria 
and generalize the manifold to varieties.

Contrary to classical bifurcation theory, no recurrent dynamics has been found so far
near bifurcation points without parameters.
For the codimension-one manifolds of equilibria discussed here, 
the drift nondegeneracy yielding the flow-box transformation prevents any recurrent dynamics. 
Similar drift conditions hold true at generic Hopf and Bogdanov-Takens points. 
In fact this is the drift which distinguishes bifurcations without parameters 
from classical bifurcations 
by preventing any flow-invariant transverse foliation. 
Recurrent dynamics should be possible at bifurcations points of higher codimension 
as the drift condition gets less restrictive.


\clearpage
\bibliographystyle{alpha_abbrv.bst}
\bibliography{Lie10-Codim1EqManifolds}


\end{document}